\theoremstyle{plain}
\newtheorem{theorem}{Theorem}
\newtheorem{lemma}{Lemma}
\theoremstyle{definition}
\newcommand{\B}{\mathbb}
\newcommand{\gd}{\delta}
\newcommand{\gD}{\Delta}
\begin{document}

\title{Zero order estimates for Mahler functions}

\author{Michael Coons}
\address{School of Math.~and Phys.~Sciences\\
University of Newcastle\\
Callaghan\\
Australia}
\email{Michael.Coons@newcastle.edu.au}

\thanks{The research of M.~Coons was supported by ARC grant DE140100223.}

\date{\today}

\keywords{Mahler functions, rational approximation, algebraic approximation}
\subjclass[2010]{Primary 11J82; Secondary 11J25}%

\begin{abstract} We give an upper bound for the zero order of the difference between a Mahler function and an algebraic function. This complements estimates of Nesterenko, Nishioka, and T\"opfer, among others, who considered polynomials evaluated at Mahler functions.
\end{abstract}

\maketitle

\section{Introduction}

When Kurt Mahler was quite young and sick in bed, he set out to prove that the number $\sum_{n\geqslant 0}2^{-2^n}$ is transcendental. He did so by exploiting the functional equation $$F(z^2)=F(z)-z,$$ which the series $\sum_{n\geqslant 0}z^{2^n}$ satisfies. In doing so, Mahler discovered an important method in the theory of Diophantine approximations with applications to transcendence and algebraic independence. 

Mahler's results were generalised and extended by several authors. The most general of generalisations, which seems to capture all previous versions considered, was given by T\"opfer \cite{T1995, T1998}, who considered formal power series $f_1(z),\ldots,f_d(z)\in\B{C}[[z]]$ that satisfy functional equations $$A_0(z,f_1(z),\ldots,f_d(z))\cdot f_i(T(z))=A_i(z,f_1(z),\ldots,f_d(z))\quad (1\leqslant i\leqslant d),$$ where $T(z)\in\B{C}(z)$ and $A_i(z,y_1,\ldots,y_d)\in\B{C}[z,y_1,\ldots,y_d]$ for $i=0,\ldots,d$. For this version, T\"opfer produced both a zero order estimate \cite{T1998} for $Q(z,f_1(z),\ldots,f_d(z))$ with $Q(z,y_1,\ldots,y_d)\in\B{C}[z,y_1,\ldots,y_d]$ as well as algebraic independence results for some special cases of his generalisation \cite{T1995}. 

Of all of the generalisations, two stand out and are arguably the most important; they are also the simplest. The first was given by Mahler himself \cite{M1929}, who considered\footnote{Actually, Mahler only considered the base field $\overline{\B{Q}}$ and not all of $\B{C}$.} functions $f(z)\in\B{C}[[z]]$ satisfying $$f(z^k)= R(z,f(z)),$$ for an integer $k\geqslant 2$ and a rational function $R(z,y)\in\B{C}(z,y)$. The second is harder to attribute, but goes back at least to  the 1960s or 1970s. In this case, one considers a function $f(z)$ for which there are integers $k\geqslant 2$ and $d\geqslant 1$ such that \begin{equation}\label{MFE}a_0(z)f(z)+a_1(z)f(z^k)+\cdots+a_d(z)f(z^{k^d})=0,\end{equation} for some polynomials $a_0(z),\ldots,a_d(z)\in\B{C}[z]$. These two generalisations coincide when $d=1$, and for this value of $d$ the strongest results have been shown. While there are few natural examples of the other classes, functions satisfying \eqref{MFE} are readily available and certain cases are of particular importance in theoretical computer science; the generating functions of automatic and regular sequences satisfy \eqref{MFE}. See the works of Allouche and Shallit \cite{AS1992,ASbook,AS2003}, Christol, Kamae, Mend{\`e}s France, and Rauzy \cite{CKMR1980}, Dekking, Mend{\`e}s France, and van der Poorten \cite{DMP1982}, Loxton \cite{L1988}, and Becker \cite{pgB1994} for further details and specific examples.

In this paper, we are concerned with the algebraic approximation of functions satisfying \eqref{MFE}. We call a function satisfying \eqref{MFE} a {\em $k$-Mahler function}, or just a {\em Mahler function} when $k$ is clear. The minimal such $d$ for which \eqref{MFE} holds for $f(z)$ is called the {\em degree} of the Mahler function $f(z)$, denoted $d_f$, and we define the {\em height} of the Mahler function $f(z)$ by $A_f:=\max\{\deg a_i(z):i=0,\ldots,d_f\}$.

Our main result is a zero order estimate for the difference of a Mahler function with an algebraic function. To this end, let $\nu:\B{C}((z))\to\B{Z}\cup\{\infty\}$ be the valuation defined by $\nu(0):=\infty$ and $$\nu\left(\sum c_n z^n \right):=\min\{i:c_i\neq 0\}$$ when $\sum_{n} c_n z^n$ is nonzero. Also, for $G(z)$ an algebraic function with minimal polynomial $P(z,y)\in\B{C}[z,y]$, we call $\deg_y P(z,y)$ the {\em degree} of $G(z)$ and we call $\exp\left(\deg_z P(z,y)\right)$ the height of $G(z)$.

\begin{theorem}\label{main} If $F(z)$ is an irrational $k$-Mahler function  of degree $d_F$ and height $A_F$, and $G(z)$ is an algebraic function of degree at most $n$ and height at most $H_G$, then $$\nu\big(F(z)-G(z)\big)\leqslant (d_F+1)\cdot A_F\cdot n^{d_F+1}+\frac{k^{d_F+1}-1}{k-1}\cdot\log H_G\cdot n^{d_F}.$$
\end{theorem}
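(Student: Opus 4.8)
The plan is to compare $F$ with $G$ through the minimal Mahler equation of $F$. Write $E(z):=F(z)-G(z)$ and $m:=\nu(E)$. Since $F$ is irrational it is transcendental (an algebraic $k$-Mahler function is necessarily rational), so $E\neq 0$ and $m<\infty$; as the right-hand side of the claimed bound is nonnegative we may assume $m\geqslant 1$. Fix a minimal equation $a_0(z)F(z)+\cdots+a_{d_F}(z)F(z^{k^{d_F}})=0$ with $\deg a_i\leqslant A_F$, and set
\[
\Phi(z):=\sum_{i=0}^{d_F}a_i(z)\,G(z^{k^i}).
\]
Substituting $G=F-E$ and using the functional equation gives the identity $\Phi(z)=-\sum_{i=0}^{d_F}a_i(z)\,E(z^{k^i})$. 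Since $\nu\big(E(z^{k^i})\big)=k^i m\geqslant m$ and $\nu(a_i)\geqslant 0$, every summand has valuation $\geqslant m$, whence $\nu(\Phi)\geqslant m$.

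Next I record two facts. First, if $\Phi\neq 0$ has minimal polynomial $P(z,y)=\sum_j p_j(z)y^j$, then $p_0\neq 0$ (otherwise $y\mid P$ forces $\Phi=0$), so $\nu(p_0)\leqslant\deg_z p_0\leqslant\deg_z P$; comparing valuations in $\sum_j p_j(z)\Phi(z)^j=0$ shows that as soon as $\nu(\Phi)>\deg_z P$ the $j=0$ term is the strict minimum and cannot cancel, a contradiction. Hence a nonzero algebraic $\Phi$ satisfies $\nu(\Phi)\leqslant\deg_z P$. Second, in the degenerate case $\Phi=0$ the identity reads $\sum_i a_i(z)E(z^{k^i})=0$; letting $i_0$ be the least index with $a_{i_0}\neq 0$, the term $i=i_0$ has valuation $\leqslant A_F+k^{i_0}m$, while every later term has valuation $\geqslant k^{i_0+1}m$, so the lowest-order term cannot cancel unless $k^{i_0}m(k-1)\leqslant A_F$, i.e. $m\leqslant A_F$. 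As $A_F$ is dominated by the right-hand side, Theorem~\ref{main} holds whenever $\Phi=0$.

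It remains to treat $\Phi\neq 0$, where by the above $m\leqslant\nu(\Phi)\leqslant\deg_z P$, so I must bound $\deg_z P$. Here I build $\Phi$ as an iterated sum of the algebraic functions $\gamma_i:=a_i(z)G(z^{k^i})$. The minimal polynomial of $G$ has $y$-degree $\leqslant n$ and $z$-degree $\leqslant\log H_G$; the substitution $z\mapsto z^{k^i}$ yields an annihilator of $G(z^{k^i})$ with $y$-degree $\leqslant n$ and $z$-degree $\leqslant k^i\log H_G$, and multiplying by $a_i$ (and clearing denominators) gives an annihilator of $\gamma_i$ with $y$-degree $\leqslant n$ and $z$-degree $\leqslant\delta_i:=k^i\log H_G+nA_F$. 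Forming partial sums $S_j=S_{j-1}+\gamma_j$ via $\mathrm{Res}_y\big(A(z,y),B(z,w-y)\big)$, the standard resultant estimates bound the degrees $(Y_j,Z_j)$ of an annihilator of $S_j$ by $Y_j\leqslant nY_{j-1}$ and $Z_j\leqslant nZ_{j-1}+Y_{j-1}\delta_j$, starting from $(Y_0,Z_0)=(n,\delta_0)$. Solving gives $Y_j\leqslant n^{j+1}$ and, by an immediate induction, $Z_j\leqslant n^{j}\sum_{i=0}^{j}\delta_i$; in particular
\[
Z_{d_F}\leqslant n^{d_F}\sum_{i=0}^{d_F}\big(k^i\log H_G+nA_F\big)=(d_F+1)A_F\,n^{d_F+1}+\frac{k^{d_F+1}-1}{k-1}\log H_G\cdot n^{d_F}.
\]
Since the minimal polynomial $P$ of $\Phi$ divides the final annihilator in $\B{C}[z][w]$, Gauss's lemma over the UFD $\B{C}[z]$ gives $\deg_z P\leqslant Z_{d_F}$, and combining with $m\leqslant\deg_z P$ yields the theorem. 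The main obstacle is precisely this degree–height bookkeeping: one must check that the resultant bounds propagate to the stated closed form, that the constructed annihilators do not degenerate to zero, and that passing to the true minimal polynomial does not increase the $z$-degree.
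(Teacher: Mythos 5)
Your proposal is correct and follows essentially the same route as the paper: you transfer $\nu(F-G)$ to $\nu\bigl(\sum_i a_i(z)G(z^{k^i})\bigr)$ via the functional equation, bound that valuation by the $z$-degree of an annihilating polynomial (the paper's Lemma~\ref{nuG}), and control that degree by iterated Sylvester resultants exactly as in Lemmas~\ref{degrees} and~\ref{MGprops}, arriving at the same closed form. The only (welcome) difference is that you explicitly dispose of the degenerate case $\Phi=0$ and of nonminimal annihilators via Gauss's lemma, points the paper passes over in silence, whereas the paper instead handles $n=1$ separately through Lemma~\ref{BC2016}.
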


Previous results on zero estimates of Mahler functions focussed on upper bounds for $\nu(Q(z,F(z)))$ for polynomials $Q(z,y)\in\B{C}[z,y]$ and used quite deep methods, relying on Nesterenko's elimination-theoretic method \cite{Nes1977,Nes1984}; see also Becker \cite{B1991}, Nishioka \cite{N1991}, and T\"opfer \cite{T1998}. While the estimate provided by Theorem \ref{main} is essentially of the same order as the best bounds for $\nu(Q(z,F(z)))$, our proof is much simpler---it avoids the use of Nesterenko's and Nishioka's methods---and is by all means, elementary. 

\section{Algebraic approximation of Mahler functions}

In recent work with Jason Bell \cite{BC2016}, we proved the following result.

\begin{lemma}[Bell and Coons]\label{BC2016} Let $F(z)$ be an irrational $k$-Mahler function of degree $d_F$ and height $A_F$, and let $P(z)/Q(z)$ be any rational function with $Q(0)\neq 0$. Then $$\nu\left(F(z)-\frac{P(z)}{Q(z)}\right)\leqslant A_F+\frac{k^{d_F+1}-1}{k-1}\cdot\max\{\deg P(z), \deg Q(z)\}.$$
\end{lemma}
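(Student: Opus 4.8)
The plan is to substitute the rational approximant into the Mahler functional equation and read off a valuation identity. Write $R(z)=P(z)/Q(z)$, set $\delta:=\max\{\deg P,\deg Q\}$, and let $E(z):=F(z)-R(z)$ and $m:=\nu(E)$; since $F$ is irrational and $R$ is rational, $E\not\equiv 0$, so $m<\infty$, and since $Q(0)\neq 0$ we have $R,E\in\B{C}[[z]]$ and $m\geqslant 0$. Fix the minimal relation $\sum_{i=0}^{d_F}a_i(z)F(z^{k^i})=0$ from \eqref{MFE}, so that $\deg a_i\leqslant A_F$ for all $i$; I will use that $a_0\not\equiv 0$ (in the minimal relation the trailing coefficient cannot vanish, because $\sigma\colon h(z)\mapsto h(z^k)$ preserves $\B{C}(z)$-linear independence, so $a_0\equiv 0$ would force a shorter relation). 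Replacing $F$ by $E+R$ and using that the $F$-terms sum to zero yields the master identity
$$\sum_{i=0}^{d_F}a_i(z)\,E(z^{k^i})=-G(z),\qquad G(z):=\sum_{i=0}^{d_F}a_i(z)\,R(z^{k^i}).$$

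Next I would treat the right-hand side as an explicit rational function. Clearing denominators over $D(z):=\prod_{i=0}^{d_F}Q(z^{k^i})$ gives $G=N/D$ with $N(z)=\sum_{i=0}^{d_F}a_i(z)P(z^{k^i})\prod_{j\neq i}Q(z^{k^j})$. Since $Q(0)\neq 0$ we have $D(0)\neq 0$, so $\nu(G)=\nu(N)$, and when $N\not\equiv 0$ this is at most $\deg N$. A termwise degree count bounds the $i$-th summand of $N$ by $A_F+\delta\,k^i+\delta\sum_{j\neq i}k^j=A_F+\delta\sum_{j=0}^{d_F}k^j$, whence
$$\deg N\;\leqslant\;A_F+\frac{k^{d_F+1}-1}{k-1}\,\delta,$$
which is exactly the right-hand side of the claimed inequality.

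The heart of the argument is then to evaluate the valuation of the left-hand side of the master identity and feed it into this degree bound. Using $\nu\big(E(z^{k^i})\big)=k^i m$, the $i=0$ term has valuation $\nu(a_0)+m$, while every term with $i\geqslant 1$ has valuation at least $k^i m\geqslant 2m$. I would split into two cases. If $m\leqslant\nu(a_0)$, then $m\leqslant\deg a_0\leqslant A_F$ and the claim is immediate (the asserted bound is $\geqslant A_F$). Otherwise $m>\nu(a_0)$, so $\nu(a_0)+m<2m$ and the $i=0$ term strictly dominates all others; hence the left-hand side has valuation exactly $\nu(a_0)+m$, so $\nu(G)=\nu(a_0)+m<\infty$. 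This forces $N\not\equiv 0$, and combining with the degree bound gives $\nu(a_0)+m=\nu(N)\leqslant\deg N\leqslant A_F+\frac{k^{d_F+1}-1}{k-1}\,\delta$; dropping the nonnegative $\nu(a_0)$ yields the lemma.

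The main obstacle to anticipate is that one cannot simply claim $G\not\equiv 0$: the approximant $R$ may itself satisfy the functional equation (for instance constants solve it whenever $a_0(0)=0$), in which case $G$ vanishes identically and the degree bound is vacuous. The dichotomy on $m$ versus $\nu(a_0)$ is precisely what sidesteps this, so it is the step I expect to require the most care. The dangerous collapse $G\equiv 0$ can occur only while $m$ is already small enough ($m\leqslant\nu(a_0)\leqslant A_F$) to be controlled directly; once $m$ exceeds $\nu(a_0)$, the $i=0$ term guarantees that $\nu(G)$ is finite, reviving the degree argument.
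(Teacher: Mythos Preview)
Your argument is correct. The paper does not actually prove this lemma---it is quoted from \cite{BC2016}---so there is no in-paper proof to compare against line by line; that said, your method is precisely the rational specialisation of the argument the paper uses for Theorem~\ref{main} (substitute the approximant into \eqref{MFE}, bound $\nu$ of the resulting expression by a $z$-degree count), with the additional and necessary care you take to justify $a_0\not\equiv 0$ and to handle the degenerate case $G\equiv 0$ via the dichotomy on $m$ versus $\nu(a_0)$.
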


Theorem \ref{main} is the generalisation of this result to approximation by algebraic functions. To prove this generalisation, we use a resultant argument.

\begin{lemma}\label{degrees} Let $f(z)$ and $g(z)$ be two algebraic functions of degrees at least $2$ satisfying polynomials of degrees $\gD_f$ and $\gD_g$ with coefficients of degree at most $\gd_f$ and $\gd_g$, respectively. Then the algebraic function $f(z)+g(z)$ satisfies a polynomial of degree $$\gD_{f+g}\leqslant \gD_f\gD_g$$ with coefficients of degree $$\gd_{f+g}\leqslant \gd_f\gD_g+\gd_g\gD_f.$$
\end{lemma}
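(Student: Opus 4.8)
The plan is to bound the degree and coefficient-size of a polynomial satisfied by $f+g$ using the classical resultant construction for sums of algebraic numbers (functions). Suppose $f$ satisfies $P(z,f)=0$ where $P(z,y)=\sum_{i} p_i(z) y^i$ with $\deg_y P = \gD_f$ and $\deg_z p_i \leqslant \gd_f$, and similarly $g$ satisfies $Q(z,g)=0$ with $\deg_y Q=\gD_g$ and coefficients of degree at most $\gd_g$. To get a polynomial annihilating $s=f+g$, I would introduce a fresh variable and consider the resultant in $y$ of $P(z,y)$ and $Q(z,\,w-y)$, namely
\[
R(z,w):=\mathrm{Res}_y\big(P(z,y),\,Q(z,w-y)\big).
\]
Then $R(z,f+g)=0$ because $y=f$ is a common root of $P(z,y)$ and $Q(z,(f+g)-y)$, so $R$ vanishes when $w=f+g$. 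This gives $f+g$ as a root of the single polynomial $R(z,w)\in\B{C}[z,w]$; one must note it is not identically zero, which holds because $f+g$ is genuinely algebraic and the leading behaviour of the resultant is controlled by the leading coefficients of $P$ and $Q$.

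Next I would read off the two degree bounds from the structure of the resultant, which is the determinant of the Sylvester matrix of $P(z,y)$ and $Q(z,w-y)$ viewed as polynomials in $y$. The Sylvester matrix has size $(\gD_f+\gD_g)\times(\gD_f+\gD_g)$: there are $\gD_g$ rows built from the coefficients of $P$ (degree $\gD_f$ in $y$) and $\gD_f$ rows built from the coefficients of $Q(z,w-y)$ (degree $\gD_g$ in $y$). For the $w$-degree $\gD_{f+g}$, I track the total degree in $w$: only the $\gD_f$ rows coming from $Q(z,w-y)$ carry any dependence on $w$, and each entry there has $w$-degree at most $\gD_g$; since the determinant is a sum of products picking one entry per row, the $w$-degree is at most $\gD_f\cdot\gD_g$, giving $\gD_{f+g}\leqslant\gD_f\gD_g$ as claimed. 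For the $z$-degree bound $\gd_{f+g}$, I similarly bound the $z$-degree of each matrix entry: entries from $P$-rows have $z$-degree at most $\gd_f$, and entries from $Q(z,w-y)$-rows have $z$-degree at most $\gd_g$ (the substitution $y\mapsto w-y$ changes only the $y,w$ dependence, not the $z$-degree of the coefficients). A single term of the determinant picks $\gD_g$ entries from the $P$-rows and $\gD_f$ entries from the $Q$-rows, so its $z$-degree is at most $\gd_f\gD_g+\gd_g\gD_f$, yielding $\gd_{f+g}\leqslant\gd_f\gD_g+\gd_g\gD_f$.

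The main obstacle, and the point needing the most care, is the counting that separates the $w$-dependence from the $z$-dependence in the same determinant expansion. Each row contributes one factor to every term of the Leibniz expansion, so I must argue that the $\gD_f$ rows of $Q$-type simultaneously account for the full $w$-degree $\gD_f\gD_g$ and contribute $\gD_f$ of the $z$-degree factors (each at most $\gd_g$), while the $\gD_g$ rows of $P$-type contribute the remaining $\gD_g$ factors to the $z$-degree (each at most $\gd_f$); these two accountings are compatible precisely because they partition the same set of rows. A secondary point is confirming that $R(z,w)$ is not the zero polynomial and that the degree-$2$ hypotheses on $f,g$ ensure the resultant genuinely detects $f+g$ rather than degenerating, so that the degrees $\gD_{f+g},\gd_{f+g}$ refer to an honest annihilating polynomial. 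Once the bookkeeping is set up cleanly, both inequalities follow directly from the Sylvester-determinant bound.
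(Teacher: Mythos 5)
Your proposal is correct and follows essentially the same route as the paper: form the resultant $\mathrm{Res}_y\bigl(P(z,y),\,Q(z,w-y)\bigr)$, which annihilates $f+g$, and read off both degree bounds from the Leibniz expansion of the Sylvester determinant, with $\gD_g$ rows of $P$-coefficients and $\gD_f$ rows of $Q$-coefficients contributing additively to the $z$-degree and the $Q$-rows alone carrying the $w$-degree. Your extra care about nonvanishing of the resultant is a reasonable addition, but the argument is otherwise the paper's own.
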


\begin{proof} This result follows by using the Sylvester matrix to calculate a certain resultant. For $R$ a ring and $P,Q\in R[y]$ with $$P(y)=\sum_{i=0}^{\deg_y P} p_i y^i\quad\mbox{and}\quad Q(y)=\sum_{i=0}^{\deg_y Q} q_i y^i,$$ the resultant of $P$ and $Q$ with respect to the variable $y$ is denoted by ${\rm res}_y(P,Q)$ and may be calculated as the determinant of the $(\deg_y Q+\deg_y P)\times (\deg_y Q+\deg_y P)$ Sylvester matrix; that is $${\rm res}_y(P,Q):=\det\left(\begin{matrix}
p_0 & p_1 & p_2 &\cdots &p_{\deg_y P} & & & \\ 
 & p_0 & p_1 & p_2 &\cdots &p_{\deg_y P} & & \\
 & & \ddots & \ddots & \ddots & &\ddots  & \\ 
 & & & p_0 & p_1 & p_2 &\cdots &p_{\deg_y P} \\
q_0 & q_1 & q_2 &\cdots &q_{\deg_y Q} & & & \\ 
 & q_0 & q_1 & q_2 &\cdots &q_{\deg_y Q} & & \\
 & & \ddots & \ddots & \ddots & &\ddots  & \\ 
 & & & q_0 & q_1 & q_2 &\cdots &q_{\deg_y Q} \\ 
\end{matrix}\right),$$ where there are $\deg_y Q$ rows of the coefficients of $P$ and $\deg_y P$ rows of the coefficients of $Q$. Now suppose $R=\B{C}[z,x]$, so that the entries of the above Sylvester matrix are polynomials in the variables $z$ and $x$, and set $D(x,z):={\rm res}_y(P,Q)$. Since polynomial degrees are additive, using the Leibniz formula for the determinant, we have immediately that \begin{equation}\label{degzD}\deg_z D(x,z)\leqslant \deg_y Q \deg_z P+\deg_y P \deg_z Q\end{equation} and \begin{equation}\label{degxD}\deg_x D(x,z)\leqslant \deg_y Q \deg_x P+\deg_y P \deg_x Q.\end{equation}

The lemma now follows immediately by combining \eqref{degzD} and \eqref{degxD} with the fact that given algebraic functions $f(z),g(z)\in\B{C}[[z]]$ and polynomials $P_f(z,y),P_g(z,y)\in\B{C}[z,y]$ with $P_f(z,f)=P_g(z,g)=0$, the algebraic function $f(z)+g(z)$ is a root of the polynomial ${\rm res}_y(P_f(z,y),P_g(z,x-y))$ viewed as a polynomial in $x$.
\end{proof}

Because of Lemma \ref{BC2016}, we may focus on algebraic functions of degree at least $2$.

\begin{lemma}\label{MGprops} Let $a_0(z),\ldots,a_d(z)$ be polynomials of degree at most $A$. If $G(z)\in\B{C}[[z]]$ is an algebraic function of degree $\gD_G\geqslant 2$ satisfying a minimal polynomial with coefficients of degree at most $\gd_g$, then the function $$M_G(z):=\sum_{i=0}^d a_i(z)G(z^{k^i})$$ is an algebraic function satisfying a polynomial of degree $$\gD_{M_G}\leqslant \gD_G^{d+1}$$ whose coefficients have degree $$\gd_{M_G}\leqslant (d+1)A\cdot\gD_G^{d+1}+\frac{k^{d+1}-1}{k-1}\cdot\gd_G\cdot\gD_G^{d}.$$
\end{lemma}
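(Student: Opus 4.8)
The plan is to realise $M_G$ through three elementary operations applied to the single algebraic function $G$---the substitution $z\mapsto z^{k^i}$, multiplication by the polynomial $a_i(z)$, and finally the summation---keeping track at each stage of the $y$-degree and of the $z$-degree of the coefficients of a defining polynomial, and invoking Lemma \ref{degrees} for the additions. Write the minimal polynomial of $G$ as $P_G(z,y)=\sum_{j=0}^{\gD_G}c_j(z)y^j$, where $\deg_y P_G=\gD_G\geqslant 2$ and $\deg c_j\leqslant \gd_G$ for each $j$.

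First I would treat the substitution. Because $G(z)$ is a root of $P_G(z,y)$, the function $G(z^{k^i})$ is a root of $P_G(z^{k^i},y)=\sum_j c_j(z^{k^i})y^j$. This polynomial still has $y$-degree $\gD_G$, while replacing $z$ by $z^{k^i}$ multiplies each coefficient degree by $k^i$, so its coefficients have $z$-degree at most $k^i\gd_G$. Next I would multiply by $a_i(z)$: setting $b_i:=a_i(z)G(z^{k^i})$, substituting $G(z^{k^i})=b_i/a_i(z)$ into the previous relation and clearing the denominator by $a_i(z)^{\gD_G}$ shows that $b_i$ is a root of $\sum_j c_j(z^{k^i})\,a_i(z)^{\gD_G-j}\,y^j$. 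This again has $y$-degree $\gD_G$ and raises the coefficient degree by at most $\gD_G A$, so $b_i$ satisfies a polynomial of degree $\gD_G$ whose coefficients have degree at most $\gd_{b_i}:=k^i\gd_G+\gD_G A$ (any index with $a_i\equiv 0$ is simply discarded).

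Finally I would assemble $M_G=\sum_{i=0}^d b_i$ by applying Lemma \ref{degrees} to the partial sums $S_m:=\sum_{i=0}^m b_i$, one summand at a time. A straightforward induction on the two bounds of Lemma \ref{degrees} then gives
$$\gD_{M_G}\leqslant \prod_{i=0}^{d}\gD_G=\gD_G^{d+1}\qquad\text{and}\qquad \gd_{M_G}\leqslant \sum_{j=0}^{d}\gd_{b_j}\prod_{i\neq j}\gD_{b_i}=\gD_G^{d}\sum_{j=0}^{d}\gd_{b_j}.$$
Substituting $\gd_{b_j}=k^j\gd_G+\gD_G A$ and evaluating the geometric sum $\sum_{j=0}^{d}k^j=(k^{d+1}-1)/(k-1)$ yields precisely the two asserted bounds.

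The step that needs genuine care---and the one I expect to be the main obstacle---is the coefficient-degree induction in the last paragraph: one must verify that the asymmetric bound $\gd_{f+g}\leqslant \gd_f\gD_g+\gd_g\gD_f$ of Lemma \ref{degrees}, applied repeatedly, accumulates into the clean symmetric expression $\sum_j\gd_{b_j}\prod_{i\neq j}\gD_{b_i}$; this works out exactly because the accompanying degree bound $\gD_{f+g}\leqslant\gD_f\gD_g$ is multiplicative. A secondary technical point is that Lemma \ref{degrees} is phrased for functions of degree at least $2$: since $\gD_G\geqslant 2$ each $b_i$ indeed has degree at least $2$, and should some partial sum $S_m$ degenerate through cancellation, one simply applies the resultant bounds \eqref{degzD} and \eqref{degxD} underlying Lemma \ref{degrees} directly, as these hold for any polynomials the summands satisfy and are all that the degree estimates require.
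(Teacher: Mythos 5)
Your proof is correct and follows essentially the same route as the paper: you form a defining polynomial for each $a_i(z)G(z^{k^i})$ (the paper writes it in one stroke as $a_i(z)^{\gD_G}P_G(z^{k^i},y/a_i(z))$, giving the same bounds $\gD_{G_i}=\gD_G$ and $\gd_{G_i}\leqslant A\gD_G+k^i\gd_G$) and then fold in the summands one at a time via Lemma \ref{degrees}. Your explicit induction yielding $\gd_{M_G}\leqslant \gD_G^{d}\sum_j \gd_{b_j}$, and your remark about falling back on the resultant bounds \eqref{degzD} and \eqref{degxD} when a partial sum degenerates, simply spell out details the paper leaves implicit.
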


\begin{proof} Since $G(z)$ is an algebraic function, so is $\sum_{i=0}^d a_i(z)G(z^{k^i})$. One can easily gain information about the sum using the theory of resultants. 

To get an upper bound on $\nu(M_G(z))$, we apply the idea of the previous paragraph by including the terms $G_i(z):=a_i(z)G(z^{k^i})$ one at a time. To do this, let $$P_{G}(z,y):=g_{\gD_{G}}y^{\gD_{G}}+\cdots+g_1y+g_0$$ be the minimal polynomial of $G(z)$. Here we have denoted the degree of $G(z)$ by $\Delta_G$. Set $\gd_G:=\deg_z P_G(z,y)$. Then $$P_{G_i}(z,y)=a_i(z)^{\Delta_G}P_{G}(z^{k^i},y/a_i(z))$$ is a polynomial with $P_{G_i}(z,G_i(z))=0$, where, of course, we only form this polynomial when $a_i(z)\neq 0$. Here, we have that $P_{G_i}(z,y)$ is still minimal with respect to the degree of $y$, but there is no guarantee that it is minimal with respect to the degree of $z$ for this degree of $y$. However, we do have that the minimal polynomial of $G_i(z)$ divides $P_{G_i}(z,y)$ and the remainder is just a polynomial in $z$. In any case, the above gives that \begin{equation}\label{DGi}\Delta_{G_i}:=\deg_y P_{G_i}(z,y)=\deg_y P_G(z,y)=\Delta_G\end{equation} and \begin{equation}\label{dGi}\delta_{G_i}:=\deg_z P_{G_i}(z,y)\leqslant A\Delta_G+k^i\delta_G.\end{equation}
The lemma now follows by combining \eqref{DGi} and \eqref{dGi} with Lemma \ref{degrees}.
\end{proof}

\begin{lemma}\label{nuG} Let $G(z)\in\B{C}[[z]]$ be an algebraic function of degree at least $2$ satisfying the polynomial $P_G(z,y)=a_n(z)y^n+a_{n-1}(z)y^{n-1}+\cdots+a_1(z)y+a_0(z),$ with $a_0(z)\neq 0$. Then $\nu(G(z))\leqslant \nu(a_0(z)).$ In particular, $\nu(G(z))\leqslant \deg_z P_G(z,y)$.
\end{lemma}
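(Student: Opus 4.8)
The plan is to exploit the defining relation $P_G(z,G(z))=0$ directly, together with the fact that $\nu$ is a valuation and hence additive on products. First I would isolate the constant coefficient and factor a single copy of $G(z)$ out of every remaining term:
$$a_0(z)=-\sum_{i=1}^n a_i(z)G(z)^i=-G(z)\cdot\sum_{i=1}^n a_i(z)G(z)^{i-1}.$$
Writing $H(z):=\sum_{i=1}^n a_i(z)G(z)^{i-1}$ for the cofactor, this expresses $a_0(z)$ as the product $-G(z)H(z)$ in $\B{C}((z))$.

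Next I would apply additivity of the valuation. Since $a_0(z)\neq 0$, neither $G(z)$ nor $H(z)$ can vanish, so $\nu(a_0)=\nu(G)+\nu(H)$ is a genuine equality of integers. It then remains to check that $\nu(H)\geqslant 0$. Each summand $a_i(z)G(z)^{i-1}$ is a product of the polynomial $a_i(z)$, which has $\nu\geqslant 0$, with a nonnegative power of the power series $G(z)\in\B{C}[[z]]$, which also has $\nu\geqslant 0$; hence $\nu\big(a_i(z)G(z)^{i-1}\big)\geqslant 0$ for every $i$, and since the valuation of a sum is at least the minimum of the valuations of its summands, $\nu(H)\geqslant 0$. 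Combining this with the displayed equality gives $\nu(a_0)=\nu(G)+\nu(H)\geqslant\nu(G)$, which is exactly the claimed bound $\nu(G)\leqslant\nu(a_0)$.

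For the final assertion I would use that, for any nonzero polynomial, the valuation—the order of the lowest nonvanishing term—is bounded above by the degree—the order of the highest. Thus $\nu(a_0)\leqslant\deg_z a_0(z)\leqslant\deg_z P_G(z,y)$, since $a_0(z)$ is one of the coefficients of $P_G(z,y)$; chaining this with the inequality just proved yields $\nu(G)\leqslant\deg_z P_G(z,y)$.

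The argument is entirely elementary, and there is no real obstacle; the only point deserving care is the passage to the equality $\nu(a_0)=\nu(G)+\nu(H)$, which requires $G(z),H(z)\neq 0$—both guaranteed by the hypothesis $a_0(z)\neq 0$—so that no term on the right is $\infty$. The assumption that $G(z)$ has degree at least $2$ is not needed for this lemma; it merely records that we are treating the range left open by Lemma \ref{BC2016}.
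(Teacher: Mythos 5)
Your proof is correct and is essentially the paper's own argument: both factor $a_0(z)=-G(z)\cdot\bigl(a_n(z)G(z)^{n-1}+\cdots+a_1(z)\bigr)$ and use additivity of $\nu$ on this product together with the nonnegativity of the cofactor's valuation. Your write-up is in fact slightly more explicit than the paper's (spelling out why $\nu(H)\geqslant 0$ and why $\nu(a_0)\leqslant\deg_z P_G$), and your observation that the degree-at-least-$2$ hypothesis is not actually used is accurate.
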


\begin{proof} Since $P_G(z,y)$ is a minimal polynomial, we have $a_0(z)\neq 0$. We thus have, identically, $$\left(a_n(z)G(z)^{n-1}+a_{n-1}(z)G(z)^{n-2}+\cdots+a_1(z)\right) G(z)=-a_0(z).$$ The fact $G(z),a_n(z),\ldots,a_0(z)\in\B{C}[[z]]$ then gives $$\nu\left(a_n(z)G(z)^{n-1}+a_{n-1}(z)G(z)^{n-2}+\cdots+a_1(z)\right)+\nu( G(z))=\nu(a_0(z)),$$ which proves the lemma, since each of the terms is a nonnegative integer.
\end{proof}

\begin{proof}[Proof of Theorem \ref{main}] Let $F(z)$ be a $k$-Mahler function satisfying \eqref{MFE} of degree $d_F$ and height $A_F$ and let $G(z)$ be an algebraic function of degree at most $n$ and height at most $H_G$. Since by Lemma \ref{BC2016}, the theorem holds for $n=1$, we may assume without loss of generality that $n\geqslant 2$. 

Set $M:=\nu(F(z)-G(z))$, and write $$F(z)-G(z)=z^{M}T(z),$$ where $T(z)\in\B{C}[[z]]$ with $T(0)\neq 0$. Then also $$\sum_{i=0}^d a_i(z)F(z^{k^i})-\sum_{i=0}^d a_i(z)G(z^{k^i})=\sum_{i=0}^d a_i(z)z^{k^iM}T(z^{k^i}),$$ which since $F(z)$ satisfies \eqref{MFE} reduces to $$M_G(z):=\sum_{i=0}^d a_i(z)G(z^{k^i})=-\sum_{i=0}^d a_i(z)z^{k^iM}T(z^{k^i}).$$ This immediately implies that $$\nu(F(z)-G(z))=M\leqslant \nu\left(M_G(z)\right)\leqslant \delta_{M_G},$$ where the last inequality follows from Lemma \ref{nuG}. By definition, $\delta_G=\log H_G$, hence applying Lemma \ref{MGprops} proves the theorem.\end{proof}

\section{Concluding remark}

The $n$-dependence in the estimate of Theorem \ref{main} is the best that can be attained by this method, that is, a bound of $n$-order $n^{d_F}$; this is the same $n$-order for the best known bounds on $\nu(Q(z,F(z)))$ as well \cite{T1998}. While at first glance, the $n$-dependence in Theorem \ref{main} looks like $n^{d_F+1}$, when using the results one usually first takes a limit through the height $H_G$. With this in mind, one assumes that $\log H_G\geqslant n\geqslant 1$ so that our estimate gives $$\nu\left(F(z)-G(z)\right)\leqslant \left((d_F+1)\cdot A_f+\frac{k^{d_F+1}-1}{k-1}\right)\cdot\log H_G\cdot n^{d_F}.$$

The immediate question is whether or not this is the best bound possible; probably the answer is `no.' Presumably a `Roth-type' estimate holds, so that one has a bound that is linear in $n$. This would imply that a Mahler function $F(z)$ is an $S$-number in the suitable function-field analogue of Mahler's classification (see Bugeaud \cite{B2004} for the relevant definitions), which is a question that has been circulating within the area for some time now.

\bibliographystyle{amsplain}
\def\polhk#1{\setbox0=\hbox{#1}{\ooalign{\hidewidth
  \lower1.5ex\hbox{`}\hidewidth\crcr\unhbox0}}} \def\cprime{$'$}
\providecommand{\bysame}{\leavevmode\hbox to3em{\hrulefill}\thinspace}
\providecommand{\MR}{\relax\ifhmode\unskip\space\fi MR }
\providecommand{\MRhref}[2]{%
  \href{http://www.ams.org/mathscinet-getitem?mr=#1}{#2}
}
\providecommand{\href}[2]{#2}


\end{document}